\documentclass[12pt]{elsarticle}

\usepackage{hyperref}
\usepackage{amssymb}
\usepackage{amsthm,bm,mathtools,amsfonts}
\usepackage{lineno}
\usepackage{mathrsfs}
\usepackage{amsmath}
\usepackage{xcolor}
\usepackage{cases}
\newtheorem{theorem}{Theorem}[section] 
\newtheorem{lemma}[theorem]{Lemma}     
\newtheorem{corollary}[theorem]{Corollary}
\newtheorem{proposition}[theorem]{Proposition}
\newtheorem{conjecture}[theorem]{Conjecture}

\numberwithin{equation}{section}

\newcommand{\lfr}[1]{\left\lfloor #1 \right\rfloor}

\makeatletter
\def\ps@pprintTitle{%
	\let\@oddhead\@empty
	\let\@evenhead\@empty
	\let\@oddfoot\@empty
	\let\@evenfoot\@empty
}
\makeatother

\begin{document}

\begin{frontmatter}

\title{The real-rootedness of the toric $g$-contribution polynomials}

\author[]{Qiqi Xiao\corref{cor1}}
\ead{xiaoqqcs@hotmail.com}
\cortext[cor1]{Corresponding author}
\address{School of Mathematical Sciences, Dalian University of Technology, Dalian 116024,\\ P. R. China}

\begin{abstract}
Recently, Ehrenborg, Hetyei and Readdy expressed the toric $g$-polynomial of a simple polytope 
as a linear combination of a family of polynomials, 
called $g$-contribution polynomials, 
with coefficients given by the entries of its gamma-vector.
They conjectured that these toric $g$-contribution polynomials are real-rooted.
This paper proves this conjecture.
\end{abstract}

\begin{keyword}
	toric $g$-polynomial \sep toric $g$-contribution polynomial \sep  real-rootedness \sep recurrence relation
	\MSC[2020] 05A05\sep 05A15\sep 52B05
\end{keyword}

\end{frontmatter}
\section{Introduction}
The toric $g$-vector is an important enumerative invariant associated with Eulerian posets \cite[Section 3.16]{Sta12}.
A fundamental property of the toric $g$-polynomial is the nonnegativity of its coefficients. 
For face lattices of rational convex polytopes, 
Stanley established the nonnegativity  of the toric $g$-polynomial coefficients 
by applying the hard Lefschetz theorem to the intersection homology of projective toric varieties \cite[ Corollary 3.2]{Sta87}. 
This property was subsequently extended to general convex polytopes by Karu, utilizing the hard Lefschetz theorem for combinatorial intersection homology \cite{Kar04}. 
Ehrenborg, Hetyei, and Readdy \cite{EHR25} showed that 
the toric $g$-polynomial of a simple polytope 
can be expressed as a linear combination of a family of polynomials, 
called $g$-contribution polynomials, 
with coefficients given by the entries of its $\gamma$-vector.
Furthermore,
they gave a closed form formula for the toric $g$-vector of the permutahedron and of several other simple polytopes, and found a combinatorial interpretation in each case.

An $n$-dimensional polytope $P$ is \emph{simple} if every vertex is incident to $n$ facets (maximal  proper faces).
The associated $h$-polynomial is given by
\[
\sum_{i=0}^n h_i x^i = \sum_{i=0}^n f_i (x-1)^i,
\]
where $f_i$ is the number of $i$-dimensional faces of $P$.
By the Dehn--Sommerville relations this polynomial is palindromic, and hence it can be
written uniquely in the form
\[
h(P,x)=\sum_{j=0}^{\lfloor n/2\rfloor}
\gamma_j x^j(1+x)^{n-2j}.
\]
The vector $(\gamma_0,\gamma_1,\ldots,\gamma_{\lfloor n/2\rfloor})$ is called the $\gamma$-vector of $P$; 
it was introduced by Gal \cite{Gal05}.

Following the notation of Ehrenborg et. al \cite{EHR25}, for $n\ge j$, we define the toric $g$-contribution polynomials by
\begin{equation}\label{def:gnj}
g_{n,j}(x)=\sum_{k=0}^{\min(\lfr{n/2},n-j)}
C_{\,n-k-j}\binom{n-k}{k}(x-1)^k,
\end{equation}
where $C_r=\frac{1}{r+1}\binom{2r}{r}$ is the $r$-th Catalan number.
The polynomial $g_{n,0}(x)$ is the toric $g$-polynomial of the $n$-dimensional cube \cite{Het12}.
Ehrenborg et. al expressed the toric $g$-polynomial of any simple polytope 
as a linear combination of these toric $g$-contribution polynomials. 

\begin{theorem}(\cite[Theorem 3.4]{EHR25})
Let $P$ be an $n$-dimensional simple polytope with $\gamma$-vector
$(\gamma_0,\gamma_1,\ldots,\gamma_{\lfr{n/2}})$.  
Then,
\[
g(P,x)=\sum_{j=0}^{\lfloor n/2\rfloor}\gamma_j\, g_{n,j}(x).
\]
\end{theorem}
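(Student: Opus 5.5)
We plan to reduce the statement to a polynomial identity in the Catalan numbers, using three facts. The toric $g$-polynomial is given by a linear recursion over faces. The faces of a simple polytope are again simple. For simple polytopes the face-sum relations are controlled by the $h$-polynomial.

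\emph{Step 1: the recursion.} Recall the definition of the toric polynomials for a polytope $P$ of dimension $n$:
\[
h(P,x)=\sum_{F<P} g(F,x)(x-1)^{n-1-\dim F},
\]
where the sum runs over all proper faces $F$ of $P$, including $\emptyset$ with $g(\emptyset)=1$. Then $g(P,x)$ is obtained from $h(P,x)$ by the standard truncation, keeping the coefficients $h_i-h_{i-1}$ for $i\le n/2$. We proceed by induction on $n$; small $n$ are checked directly. Every proper face $F$ of a simple polytope is simple, with its own $\gamma$-vector $\gamma(F)$. By the induction hypothesis,
\[
g(F,x)=\sum_j \gamma_j(F)\,g_{\dim F,j}(x).
\]
Hence $h(P,x)$, and therefore $g(P,x)$, is a linear expression in the numbers
\[
\Gamma_{d,j}(P)=\sum_{\dim F=d}\gamma_j(F).
\]

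\emph{Step 2: face sums.} Pass to the polar simplicial sphere $\Delta=P^*$. A face $F$ of $P$ of dimension $d$ corresponds to a face of $\Delta$ of codimension $d$, and its $h$-polynomial is that of the link of this face. The classical identity
\[
\sum_{v}h(\mathrm{lk}\,v,x)=n\,h(\Delta,x)+(1-x)h'(\Delta,x),
\]
suitably normalised, iterates to express $\sum_{\dim F=d}h(F,x)$ as a linear differential operator applied to $h(P,x)$. Now substitute
\[
h(P,x)=\sum_j\gamma_j x^j(1+x)^{n-2j}.
\]
The operator sends each basis element $x^j(1+x)^{n-2j}$ to an explicit combination of $x^{i}(1+x)^{d-2i}$. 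Palindromicity is preserved, so the output is again a $\gamma$-expansion. This gives $\Gamma_{d,i}(P)=\sum_j c_{n,d,j,i}\gamma_j$ with explicit binomial coefficients $c_{n,d,j,i}$. A sanity check is available: for the cube, where only $\gamma_0=1$, it must reproduce $\binom{n}{d}2^{n-d}$ faces of dimension $d$, each contributing $\gamma_0=1$.

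\emph{Step 3: reduction.} Inserting Step 2 into Step 1 shows that $g(P,x)$ is linear in $(\gamma_0,\dots,\gamma_{\lfloor n/2\rfloor})$. It remains to check that the coefficient of $\gamma_j$ is $g_{n,j}(x)$. This amounts to verifying that the polynomial
\[
x^j(1+x)^{n-2j}-\sum_{d<n}\sum_i c_{n,d,j,i}\,g_{d,i}(x)(x-1)^{n-1-d}
\]
truncates to $g_{n,j}$ as defined in \eqref{def:gnj}. One shortcut is to realise $x^j(1+x)^{n-2j}$ formally as $h$ of the virtual polytope $\square^{\,n-2j}\times(\text{pentagon}-\text{square})^{j}$. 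This is legitimate because $h$ is multiplicative under products, and $h(\text{pentagon})-h(\text{square})=x$. Linearity then reduces the computation to products of polygons and intervals. The case $j=0$ is the cube, for which $g_{n,0}$ is already known \cite{Het12}.

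The main obstacle is the final identity of Step 3. It is a double sum of Catalan numbers times binomial coefficients that must collapse to
\[
\sum_k C_{n-k-j}\binom{n-k}{k}(x-1)^k .
\]
I would first attack it with generating functions. Use $C(t)=\frac{1-\sqrt{1-4t}}{2t}$. Note that
\[
\sum_{n}\binom{n-k}{k}u^{n}=\frac{u^{2k}}{(1-u)^{k+1}},
\]
so summing over $n$ and $k$ turns both sides into algebraic functions of $u$ and $x$. Comparing these should verify the identity. If that route becomes unwieldy, I would fall back on a WZ/Zeilberger certificate in $n$.
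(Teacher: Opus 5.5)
\textbf{The central identification is missing.} The paper gives no proof of this theorem; it is quoted from \cite{EHR25}. Your proposal therefore has to stand on its own.

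Steps~1 and~2 are sound. They show that $g(P,x)=\sum_j\gamma_j G_{n,j}(x)$ for some universal polynomials $G_{n,j}$, with the empty face charged to $\gamma_0=1$. But the entire content of the theorem is $G_{n,j}=g_{n,j}$, and that is exactly what is missing. The $c_{n,d,j,i}$ are never computed, and the final identity is only announced (``should verify'', ``fall back on WZ'').

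\textbf{The identity as stated is false.} You conflate two different polynomials called $h$. The toric $h$-polynomial $\sum_{F<P}g(F,x)(x-1)^{n-1-\dim F}$ of Step~1 is not the $h$-polynomial $\sum_j\gamma_jx^j(1+x)^{n-2j}$ of Step~2. For the $3$-cube they are $1+5x+5x^2+x^3$ and $(1+x)^3$. The correct target is that the truncation of $\delta_{j,0}(x-1)^n+\sum_{d=0}^{n-1}\sum_i c_{n,d,j,i}\,g_{d,i}(x)(x-1)^{n-1-d}$ equals $g_{n,j}(x)$, with no $x^j(1+x)^{n-2j}$ term. Already for the square ($n=2$, $j=0$), your subtracted sum is the toric $h$-polynomial $(1+x)^2$. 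So your expression is $0$, not something truncating to $g_{2,0}=1+x$.

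\textbf{The virtual-polytope shortcut does not help.} $h$ is multiplicative under products, but toric $g$ is not: $g(I)=1$, $g(I^2)=1+x$, $g(I^3)=1+4x$. The shortcut only trades the problem for computing toric $g$ of $I^{n-2j}$ times $a$ pentagons and $j-a$ squares. That is no easier, and only $a=0$ (the cube) is covered by \cite{Het12}.

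\textbf{How to complete your framework.}
\begin{enumerate}
\item[(1)] An $i$-face of a simple $n$-polytope lies in exactly $\binom{n-i}{d-i}$ faces of dimension $d$. Hence $\sum_d s^d\sum_{\dim F=d}h(F,x)=(1+s)^nh\bigl(P,\tfrac{1+xs}{1+s}\bigr)$. With $A=(1+x)s$ and $B=xs^2$ this becomes $\sum_{d,i}\Gamma_{d,i}A^{d-2i}B^i=\sum_j\gamma_j(1+A+B)^j(2+A)^{n-2j}$, which gives all $c_{n,d,j,i}$ at once.
\item[(2)] Rather than truncating directly, let $h^{\mathrm{tor}}(P,x)$ be the toric $h$-polynomial. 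Let $\Lambda$ be the $\mathbb{Q}[x,s]$-linear map $A^aB^b\mapsto g_{a+2b,b}(x)s^{a+2b}$. Set $E_{n,j}=\delta_{j,0}(x-1)^{n+1}+(x-1)^n\Lambda\bigl((1+A+B)^j(2+A)^{n-2j}\bigr)\big|_{s=1/(x-1)}$. The $\gamma_j$-coefficient of $(x-1)h^{\mathrm{tor}}(P,x)$ is $E_{n,j}-g_{n,j}(x)$, so it suffices to show $E_{n,j}=x^{n+1}g_{n,j}(1/x)$. Since $\deg g_{n,j}\le n/2$, truncation then yields $\sum_j\gamma_jg_{n,j}$.
\item[(3)] Lemma~\ref{rec_g} says precisely that $\Lambda$ is unchanged when $B$ is replaced by $sA+(x-1)s^2$. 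At $s=1/(x-1)$ this turns $1+A+B$ into $\tfrac{x}{x-1}(1+A)$.
\item[(4)] Expand $(1+A)^j=((2+A)-1)^j$. This reduces $E_{n,j}$ to two known facts. The first is the cube identity $(x-1)^{m+1}+\sum_a\binom{m}{a}2^{m-a}(x-1)^{m-a}g_{a,0}(x)=x^{m+1}g_{m,0}(1/x)$, which is Hetyei's theorem plus palindromicity of the toric $h$-polynomial. The second is the $j$-fold iterate $g_{n,j}(z)=\sum_l\binom{j}{l}(z-1)^{j-l}g_{n-2j+l,0}(z)$ of Lemma~\ref{rec_g}.
\end{enumerate}
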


The polynomial $g_{n,j}(x)$ gives the contribution of the entry $\gamma_j$ to the toric $g$-polynomial of an $n$-dimensional simple polytope.
These polynomials are independent of the particular polytope and depend only on $n$ and $j$. 

Ehrenborg et. al \cite{EHR25} used this formula to study the toric
$g$-vectors of several families of nestohedra, including associahedra, cyclohedra, permutahedra, and chordal nestohedra.  
Their work also led to the following real-rootedness conjecture for $g_{n,j}(x)$, where $0\le j\le \lfr{n/2}$.

\begin{conjecture} (\cite[Conjecture 11.1]{EHR25}) \label{Conj_g}
The toric $g$-contribution polynomials $g_{n,j}(x)$ are real-rooted for $0\le j\le \lfloor n/2 \rfloor$.
\end{conjecture}

The main result of this paper proves this conjecture.

\begin{theorem}\label{thm:main}
For all integers $n\geq 0$ and $0\leq j\leq \lfr{n/2}$, the polynomial $g_{n,j}(x)$ is real-rooted.
Moreover, $g_{n,j}(x)$ interlaces $g_{n+1,j}(x)$ for $0\leq j\leq \lfr{n/2}$,
and $g_{n,j}(x)$ interlaces $g_{n+1,j+1}(x)$ for $0\leq j\leq \lfr{(n-1)/2}$.

\end{theorem}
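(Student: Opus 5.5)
We will prove Theorem~\ref{thm:main} by induction on $n$. The first step is to find a three-term recurrence expressing $g_{n+1,j+1}(x)$, or $g_{n+1,j}(x)$, in terms of $g_{n,j}(x)$ and nearby polynomials, with coefficients that are polynomials in $x$. Interlacing is then carried along the induction using the standard lemmas on compatible/interlacing sequences (Wagner, Chudnovsky--Seymour, or Liu--Wang).

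\textbf{Step 1: recurrences.} Write $y=x-1$ and $a_{n,j,k}=C_{n-k-j}\binom{n-k}{k}$. Two elementary identities are available:
\[
\binom{n+1-k}{k}=\binom{n-k}{k}+\binom{n-k}{k-1},
\qquad
(r+2)C_{r+1}=2(2r+1)C_r .
\]
From these I expect relations of the following shape:
\begin{itemize}
\item a ``horizontal'' identity
\[
g_{n+1,j+1}(x)=g_{n,j}(x)+y\,g_{n-1,j}(x) \quad (\text{possibly up to index shifts}),
\]
obtained by splitting the binomial with Pascal's rule, since $C_{(n+1)-k-(j+1)}=C_{n-k-j}$;
\item a relation in $j$ coming from the Catalan recurrence, i.e.\ a linear relation among $g_{n,j}$, $g_{n,j+1}$ and $g_{n+1,j}$ with coefficients linear in $n$, $j$ and $x$.
\end{itemize}
Concretely, I would compute $g_{n,j}$ for small $n$ to guess the exact recurrence, then verify it coefficientwise in $k$.

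\textbf{Step 2: inductive interlacing.} The induction hypothesis is that $g_{n-1,j}$ interlaces both $g_{n,j}$ and $g_{n,j+1}$, and that all of these are real-rooted with positive leading coefficients. Note that $g_{n,j}(1)=C_{n-j}>0$. Moreover, all coefficients of $g_{n,j}$ are positive as polynomials in $y$, so all roots lie in $x<1$, i.e.\ $y<0$. This is the regime where factors like $y=x-1$ have constant sign on the root region, which makes the lemmas applicable.

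From $g_{n+1,j+1}=g_{n,j}+y\,g_{n-1,j}$, use the fact that $g_{n-1,j}$ interlaces $g_{n,j}$ and that $y\le 0$ on the relevant range. The standard lemma states: if $f$ interlaces $g$ and $c(x)\le 0$ on the roots, then $g+c f$ is real-rooted and interlaced by $g$. This gives that $g_{n,j}$ interlaces $g_{n+1,j+1}$. For the statement about $g_{n+1,j}$, use the second recurrence to write $g_{n+1,j}$ as a combination $A(x)g_{n,j}+B(x)g_{n,j+1}$ or $A g_{n,j}+B g_{n-1,j}$ with $B\le 0$ on the roots. Then apply the same lemma, or its version for a common interlacer: if $f$ and $h$ both interlace $g$, then so does any positive combination.

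\textbf{Main obstacle.} The difficulty is the boundary of the index range, $k\le\min(\lfloor n/2\rfloor,n-j)$, together with the degree bookkeeping. Degrees jump when $n$ changes parity, and at $j=\lfloor n/2\rfloor$ the truncation $n-j$ interacts with the upper limit. Interlacing requires degree differences of $0$ or $1$ with the correct orientation, so the case $j$ near $n/2$ must be handled separately. There $g_{n,j}$ has very small degree (e.g.\ $g_{2m,m}$ is essentially $\sum_k C_{m-k}\binom{2m-k}{k}y^k$), and I would treat it directly or via the horizontal recurrence alone.

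I would first verify the recurrences and their validity at the boundary. If the second recurrence has coefficients whose signs are not uniform on the root region, I would fall back to a Sturm-sequence argument in $y<0$: show that the sequences $g_{n,j}$ along $n$ alternate in sign at the roots of $g_{n,j}$, using the recurrence evaluated at those roots.
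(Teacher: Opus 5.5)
Your treatment of the second interlacing is correct and matches the paper: $g_{n+1,j+1}=g_{n,j}+(x-1)g_{n-1,j}$ (Lemma~\ref{rec_g}), together with $g_{n-1,j}\preceq g_{n,j}$ and $r-1<0$ at every zero $r$ of $g_{n,j}$.

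The gap is the first interlacing $g_{n,j}\preceq g_{n+1,j}$, on which the whole theorem rests. For $j=0$ (the cube) the horizontal identity says nothing, since $g_{n,0}$ never appears on its left-hand side. Your Step~1 never produces the recurrence this step needs. Moreover, the derivative-free shapes you guess, with coefficients linear in $x$, do not exist. Write $f_{n,j}(x)=g_{n,j}(x+1)$. The relation $\alpha f_{9,0}=(\beta+\gamma x)f_{8,0}+(\delta+\epsilon x)f_{8,1}$ admits only the trivial solution $(\alpha,\beta,\gamma,\delta,\epsilon)=0$, and the same holds with $f_{7,0}$ in place of $f_{8,1}$; in both cases the $6\times 5$ coefficient systems have only the zero solution.

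The reason is that in the Catalan recurrence $(r+2)C_{r+1}=2(2r+1)C_r$ you must take $r=n-k-j$. The multipliers then depend on $k$ and turn into $x\frac{d}{dx}$. What your idea actually yields is, for $1\le j\le\lfr{n/2}$, the relation $(n-j+2)f_{n,j-1}-xf_{n,j-1}'=2(2n-2j+1)f_{n,j}-4xf_{n,j}'$.

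The missing idea is to accept the derivative term. With $u=t(1+xt)$ one has $\sum_n f_{n,j}(x)t^n=u^jC(u)$ and $u(1-4u)C'(u)+(1-2u)C(u)=1$. For $F=G(u)$ one has $\partial_tF=(1+2xt)G'(u)$ and $\partial_xF=t^2G'(u)$, so
$u(1-4u)G'(u)=[t-(x+4)t^2-2xt^3]\partial_tF+2x(x+1)t\,\partial_xF$.
Extracting coefficients (Proposition~\ref{rec:fnjr}) gives, for $n\ge 2j$,
\[
(n+2-j)g_{n+1,j}=\big[(x+3)n-4j+2\big]g_{n,j}-2x(x-1)g_{n,j}'+2(x-1)(n-2j)g_{n-1,j}.
\]
Lemma~\ref{rz:F} then applies, with $g_{n,j}'\preceq g_{n,j}$ by Rolle and $g_{n-1,j}\preceq g_{n,j}$ by induction.

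Even with this recurrence, two points in your plan need repair.

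First, your root localization $x<1$ is too weak. The coefficient $-2r(r-1)$ of the derivative term is positive for $r\in(0,1)$, so you need every zero to satisfy $r\le 0$. The paper gets this from the nonnegativity of the coefficients of $g_{n,j}$ in $x$, via the Dyck-path interpretation in \cite[Lemma 3.3]{EHR25}. Positivity of the coefficients in $y=x-1$ does not give it.

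Second, the induction in $n$ at fixed $j$ needs the base case $g_{2j,j}\preceq g_{2j+1,j}$, which does not follow from the horizontal identity alone. The paper proves it in Lemma~\ref{pj<qj} through separate three-term recurrences for $f_{2j,j}$ and $f_{2j+1,j}$. Alternatively, you can use the displayed recurrence at $n=2j$, where the $g_{n-1,j}$ term drops out, once $g_{2j,j}$ is known to be real-rooted.

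Your fallback Sturm-sequence argument is not a substitute, because it presupposes exactly the recurrence in $n$ that is missing.
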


The paper is organized as follows.  
Section~2 provides a new recurrence relation for the toric $g$-contribution polynomials. 
Section~3 proves Theorem \ref{thm:main} and discusses consequences for the coefficient sequences of $g_{n,j}(x)$.  
The final section indicates some related questions suggested by the Ehrenborg--Hetyei--Readdy expansion.

\section{Recurrence relation}

The toric $g$-contribution polynomial $g_{n,j}(x)$ is defined in equation \eqref{def:gnj},
the main conjecture concerns the subrange $0\le j\le \lfloor n/2\rfloor$.

For $0\le j\le \lfloor n/2\rfloor$, Ehrenborg et al. \cite[Lemma 3.3]{EHR25} showed that 
the coefficient of $x^k$ in $g_{n,j}(x)$ equals 
the number of Dyck paths of semilength $n-j$ with $k$ peaks 
whose first coordinate is at most $n-1$. 
In particular, $g_{n,j}(x)$ has nonnegative coefficients.
Therefore,
\begin{equation}\label{g0>0}
	g_{n,j}(x)>0 \qquad \text{for } x\in(0,\infty),
\end{equation}
whenever $0\le j\le \lfr {n/2}$.
They also established a recurrence relation for $g_{n,j}(x)$. 
We restate this recurrence relation here.

\begin{lemma}(\cite[Lemma 5.1]{EHR25})\label{rec_g}
The toric $g$-contribution polynomials $g_{n,j}(x)$ satisfy the recurrence
\begin{equation}\label{rec_g3}
g_{n,j}(x) = g_{n-1,j-1}(x) + (x - 1) \cdot g_{n-2,j-1}(x)
\end{equation}
for $n \ge 2$ and $j \ge 1$, with $g_{0,0}(x)=g_{1,0}(x)=1$.    
\end{lemma}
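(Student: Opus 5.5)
The recurrence follows directly from the defining formula \eqref{def:gnj} and Pascal's rule; no properties of Dyck paths are needed. The plan is to expand both terms on the right-hand side of \eqref{rec_g3}, reindex so that both are sums over the power $(x-1)^k$ with the same Catalan factor $C_{n-k-j}$, and then combine the binomial coefficients.

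First, write
\[
g_{n-1,j-1}(x)=\sum_{k=0}^{\min(\lfr{(n-1)/2},\,n-j)} C_{n-k-j}\binom{n-1-k}{k}(x-1)^k .
\]
Next, multiply
\[
g_{n-2,j-1}(x)=\sum_{k} C_{n-1-k-j}\binom{n-2-k}{k}(x-1)^k
\]
by $(x-1)$ and shift $k\mapsto k-1$. This gives
\[
(x-1)\,g_{n-2,j-1}(x)=\sum_{k=1}^{\min(\lfr{(n-2)/2}+1,\,n-j)} C_{n-k-j}\binom{n-1-k}{k-1}(x-1)^k .
\]
Both sums now carry the same Catalan factor $C_{n-k-j}$. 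Pascal's identity $\binom{n-1-k}{k}+\binom{n-1-k}{k-1}=\binom{n-k}{k}$ then produces exactly the summand of $g_{n,j}(x)$ in \eqref{def:gnj}.

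The only step needing care is the bookkeeping of the summation ranges. This is also the one place an error could hide. I would handle it by extending every sum to $0\le k\le n-j$, using the convention $\binom{a}{b}=0$ for $b<0$ or $b>a\ge 0$. The extra terms then vanish:
\begin{itemize}
\item $\binom{n-1-k}{k}=0$ once $k>\lfr{(n-1)/2}$;
\item $\binom{n-1-k}{k-1}=0$ once $k>\lfr{n/2}$, and also at $k=0$;
\item $\binom{n-k}{k}=0$ once $k>\lfr{n/2}$.
\end{itemize}
So each sum may be taken over $0\le k\le \min(\lfr{n/2},\,n-j)$. The cap $k\le n-j$ is common to all three sums, and it is exactly where the Catalan index $n-k-j$ stays nonnegative.

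Finally, I must make sure no binomial with a negative upper index appears in the range used. Since $n\ge 2$ and $k\le n/2$, we have $n-1-k\ge n/2-1\ge 0$. The hypotheses $n\ge 2$ and $j\ge 1$ also guarantee that $g_{n-1,j-1}$ and $g_{n-2,j-1}$ are defined, i.e.\ $n-2\ge j-1$ whenever $j\le \lfr{n/2}$. Checking the initial values $g_{0,0}=g_{1,0}=1$ directly from \eqref{def:gnj} (only $k=0$ contributes, with $C_0=C_1=1$) completes the argument.
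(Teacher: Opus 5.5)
Your proof is correct. The reindexing $k\mapsto k-1$, the common Catalan factor $C_{n-k-j}$, Pascal's rule, and the range bookkeeping all check out; the bookkeeping rests on $\lfloor (n-2)/2\rfloor+1=\lfloor n/2\rfloor$ and on the boundary binomials vanishing.

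The paper gives no proof of this lemma; it simply quotes \cite[Lemma 5.1]{EHR25}. So your argument is a self-contained verification rather than a variant of something in the text. The paper's own machinery does, however, give the identity in one line. From $F_j(x,t)=u^jC(u)$ with $u=t(1+xt)$ (equation \eqref{eq:FCu}) one has $F_j(x,t)=t(1+xt)\,F_{j-1}(x,t)$. Extracting $[t^n]$ yields $f_{n,j}(x)=f_{n-1,j-1}(x)+x f_{n-2,j-1}(x)$. This is the lemma after $x\mapsto x-1$, and it is exactly the form used in the proof of Lemma~\ref{pj<qj}.

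Comparing the two: the generating-function route needs no range bookkeeping and is valid for all $n\ge 2$, $j\ge 1$, with the convention $f_{m,j}=0$ for $m<j$. Your coefficientwise Pascal step is what multiplying $(1+xt)^{m+j-1}$ by $(1+xt)$ does, written out term by term. Its advantage is that it uses nothing beyond the defining formula \eqref{def:gnj}.

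One small imprecision: it is not $n\ge 2$ and $j\ge 1$ that guarantee $n-2\ge j-1$, but $j\le n-1$, which follows from $j\le\lfloor n/2\rfloor$ when $n\ge 2$. In fact your argument works verbatim for every $1\le j\le n-1$, since $k\le n-j$ forces $n-1-k\ge j-1\ge 0$. For $j=n$ the identity reads $1=1+0$ once $g_{n-2,n-1}$ is interpreted as an empty sum.
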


Since recurrence \eqref{rec_g3} alone fails to establish the real-rootedness of the polynomial $g_{n,j}(x)$, 
it is reasonable to explore other recurrence relations. 
In what follows, we provide a recurrence relation from which the real-rootedness of the polynomial $g_{n,j}(x)$ can be deduced. 
We first require the following result.

Let $C_{n}=\frac{1}{n+1}\binom{2n}{n}$ be the $n$-th Catalan number, 
and let
\begin{equation}\label{eq:Cu}
C(u)=\sum_{n=0}^{\infty}C_n u^n = \frac{1 - \sqrt{1 - 4u}}{2u}
\end{equation}
be the Catalan generating function.

\begin{lemma}\label{rec_catalan}
The Catalan generating function $C(u)$ satisfies the differential equation
\begin{equation}\label{eq:C'C}
u(1-4u)C'(u)+(1-2u)C(u)=1.
\end{equation}
\end{lemma}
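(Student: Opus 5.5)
We will verify the identity \eqref{eq:C'C} in two ways. The first uses the closed form \eqref{eq:Cu}. The second works with coefficients and so avoids any questions about branches of the square root.

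For the closed-form check, set $s=\sqrt{1-4u}$, so that $C(u)=(1-s)/(2u)$ and $s'=-2/s$. Differentiating gives
\[
C'(u)=\frac{-s'\cdot 2u-2(1-s)}{4u^2}=\frac{4u/s-2+2s}{4u^2}.
\]
Since $1-4u=s^2$, we get $u(1-4u)C'(u)=\dfrac{s(4u-2s+2s^2)}{4u}$. Also $(1-2u)C(u)=\dfrac{(1-2u)(1-s)}{2u}$. We add these two over the common denominator $4u$ and substitute $s^2=1-4u$ wherever it appears. The numerator is
\[
4us-2s^2+2s^3+2(1-2u)(1-s).
\]
Substituting $s^2=1-4u$ and $s^3=s(1-4u)$, it becomes
\[
4us-2+8u+2s-8us+2-4u-2s+4us=4u.
\]
Dividing by $4u$ gives $1$, which is the claimed identity.

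For the coefficient check, we compare the coefficient of $u^n$ on both sides. Since $(1-4u)C'(u)$ contributes $nC_n-4(n-1)C_{n-1}$ after multiplying by $u$, the coefficient of $u^n$ on the left-hand side is
\[
nC_n-4(n-1)C_{n-1}+C_n-2C_{n-1}=(n+1)C_n-(4n-2)C_{n-1}
\]
for $n\ge1$. For $n=0$ the coefficient is $C_0=1$. So it suffices to prove $(n+1)C_n=2(2n-1)C_{n-1}$. This is the standard Catalan recurrence and follows directly from $C_n=\frac{1}{n+1}\binom{2n}{n}$ together with $\binom{2n}{n}=\frac{2(2n-1)}{n}\binom{2n-2}{n-1}$. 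Hence every coefficient of positive degree vanishes, the constant term is $1$, and the identity holds as formal power series.

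The only delicate point is the algebraic simplification in the first argument. The coefficient argument is the cleaner route, and we would present that one.
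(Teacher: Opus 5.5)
Both of your arguments are correct, and both take a different route from the paper. The paper uses neither the closed form nor the coefficients. It starts from the quadratic functional equation $C(u)=1+uC(u)^2$ and differentiates it to get $C'(u)=C(u)^2/(1-2uC(u))$. It substitutes this into the left-hand side, which gives the numerator $(1-2u)C(u)-uC(u)^2$, and uses the functional equation again to reduce that numerator to $1-2uC(u)$.

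Your coefficient argument shows instead that the differential equation is equivalent to the two-term recurrence $(n+1)C_n=2(2n-1)C_{n-1}$ with $C_0=1$. You then check that recurrence directly from the definition $C_n=\frac{1}{n+1}\binom{2n}{n}$, which is how the paper defines $C_n$. Your route is self-contained: it does not rely on the cited functional equation, does not divide by the series $1-2uC(u)$, and has no branch issues. It also shows plainly that the ODE is the ratio recurrence for Catalan numbers in generating-function form. The paper's route is shorter once the functional equation is granted, and it shows the ODE follows formally from the algebraic equation alone, with no explicit formula for $C_n$.

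Your closed-form computation is also algebraically right; the numerator does reduce to $4u$. As you note, though, it identifies the power series with the principal branch of the square root near $0$, so the coefficient version is the better one to present.
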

\begin{proof}
It is well known that the generating function of Catalan numbers $C(u)$ satisfies the functional equation \cite[p. 192]{Ath26}
\begin{equation}\label{eq:C}
C(u)=1+uC(u)^2.
\end{equation}
Differentiating both sides of \eqref{eq:C} with respect to $u$ and 
solving for $C'(u)$, we get
\begin{equation}\label{eq:C'}
C'(u)=\frac{C(u)^2}{1-2uC(u)}.
\end{equation}

Substituting equation \eqref{eq:C'} into the left-hand side (LHS) of the identity equation \eqref{eq:C'C} and using again \eqref{eq:C}, 
we get
\[
\text{LHS}=\frac{(1-2u)C(u)-uC(u)^2}{1-2uC(u)}=\frac{1-2uC(u)}{1-2uC(u)}=1=\text{RHS}.
\]
The identity is proved.
\end{proof}

Let 
\begin{equation}\label{eq:f-def}
f_{n,j}(x)\coloneqq g_{n,j}(x+1)
=
\sum_{k=0}^{\min\{\lfloor n/2\rfloor,\,n-j\}}
C_{n-k-j}\binom{n-k}{k}x^k.
\end{equation}
With the convention $C_{r}=0$ for $r<0$, 
we have $f_{n,j}(x)=0$ whenever $n<j$.
Clearly,
the toric $g$-contribution polynomial $g_{n,j}(x)$ is real-rooted if and only if
$f_{n,j}(x)$ is real-rooted.

By Lemma \ref{rec_catalan}, 
we can establish the following recurrence relation for $f_{n,j}(x)$. 

\begin{proposition}\label{rec:fnjr}
For all integers $n \ge j$, 
\[
\begin{aligned}
(n + 2 - j) f_{n+1,j}(x)
={}& \big[(x+4)n - 4j + 2\big] f_{n,j}(x) - 2x(x+1) f_{n,j}'(x) \\
&+ 2x(n - 2j) f_{n-1,j}(x) + r_{n+1,j}(x)
\end{aligned}
\]
with $f_{j,j}(x) = 1$,
where
\[
r_{n+1,j}(x) =
\begin{cases}
\dbinom{j}{n+1-j} x^{n+1-j}, & \text{if } j-1 \le n \le 2j-1, \\[6pt]
0, & \text{otherwise}.
\end{cases}
\]

In particular, for $n \ge 2j$,
\begin{equation}\label{rec:fnj}
\begin{aligned}
	(n + 2 - j) f_{n+1,j}(x)
	={}& \big[(x+4)n - 4j + 2\big] f_{n,j}(x) - 2x(x+1) f_{n,j}'(x) \\
	&+ 2x(n - 2j) f_{n-1,j}(x).
\end{aligned}
\end{equation}
\end{proposition}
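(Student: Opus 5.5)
Set $m=n-j$ and expand everything in the monomial basis. Since both sides are polynomials in $x$, it suffices to compare coefficients of $x^k$. Write the coefficient of $x^k$ in $f_{n,j}$ as
\[
a_{n,k}=C_{n-k-j}\binom{n-k}{k}.
\]
This formula holds for all $k\ge 0$ once we use the conventions $C_r=0$ for $r<0$ and $\binom{a}{b}=0$ when $b>a$ or $b<0$. Those conventions automatically enforce the truncation $k\le \min\{\lfloor n/2\rfloor,n-j\}$, except at one boundary, which produces $r_{n+1,j}$ and is discussed below.

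Extracting the coefficient of $x^k$ from the claimed identity gives
\[
(n+2-j)a_{n+1,k}=(4n-4j+2-2k)\,a_{n,k}+(n-2k+2)\,a_{n,k-1}+2(n-2j)\,a_{n-1,k-1}.
\]
This uses $-2x(x+1)f'$, which contributes $-2k\,a_{n,k}x^k-2(k-1)a_{n,k-1}x^k$. Together with the $xn\,f_{n,j}$ term it yields the stated coefficient of $a_{n,k-1}$.

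The Catalan input enters through Lemma~\ref{rec_catalan}. Reading off the coefficient of $u^r$ in \eqref{eq:C'C} gives $(r+2)C_{r+1}=(4r+2)C_r$ for $r\ge 0$, and for $r=-1$ it gives the boundary fact $C_0=1$. The last point matters: the relation "$(r+2)C_{r+1}=(4r+2)C_r$" fails exactly at $r=-1$, which is the source of the correction term. I would set $r=n-k-j$, so that $a_{n+1,k}=C_{r+1}\binom{n+1-k}{k}$, $a_{n,k}=C_r\binom{n-k}{k}$, $a_{n,k-1}=C_{r+1}\binom{n-k+1}{k-1}$ and $a_{n-1,k-1}=C_r\binom{n-k}{k-1}$. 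The plan is then as follows.

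First, replace $C_{r+1}$ by $\frac{4r+2}{r+2}C_r$ wherever it appears. This is valid when $r\ge 0$.

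Second, divide by $C_r$ and express every binomial as a rational multiple of $\binom{n-k}{k}$, using
\[
\binom{n+1-k}{k}=\frac{n+1-k}{n+1-2k}\binom{n-k}{k},\qquad \binom{n-k}{k-1}=\frac{k}{n-2k+1}\binom{n-k}{k}.
\]
The identity then reduces to a rational-function identity in $n,j,k$. I would clear denominators and verify it as a polynomial identity. This algebra is routine but is the main bookkeeping obstacle. To keep it tractable, I would first isolate the two $C_{r+1}$ terms on the left, combining $(n+2-j)\binom{n+1-k}{k}$ with $(n-2k+2)\binom{n-k+1}{k-1}$, using $\binom{n+1-k}{k}+\binom{n+1-k}{k-1}=\binom{n+2-k}{k}$-type manipulations.

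The remaining issue is the boundary cases, which are where I expect the real care to be needed. The case $r\ge 0$ with $k$ in the interior is handled above. When $r=-1$, that is $k=n+1-j$, we have $a_{n,k}=a_{n-1,k-1}=0$ and $a_{n,k-1}=C_0\binom{j}{n+1-j}$. The left side is $(n+2-j)C_0\binom{j}{n+1-j}$. The right side contributes $(n-2k+2)\binom{j}{k-1}$, with $n-2k+2=2j-n$. The discrepancy should equal $\binom{j}{n+1-j}$, which is nonzero precisely when $0\le n+1-j\le j$, i.e.\ $j-1\le n\le 2j-1$. This is exactly $r_{n+1,j}$. I would also check the edge $k>n/2$, where binomials vanish but the rational rewriting divides by zero. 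These cases should be treated separately by direct inspection of which terms are zero, using $\binom{n+1-k}{k}=0$ unless $2k\le n+1$.

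Finally, the base case $f_{j,j}=1$ is immediate from \eqref{eq:f-def}. Specializing to $n\ge 2j$ kills $r_{n+1,j}$ and gives \eqref{rec:fnj}.
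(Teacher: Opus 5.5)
Your coefficient-by-coefficient verification is correct, and it takes a genuinely different route from the paper's. The paper works entirely with generating functions. It writes $F_j(x,t)=\sum_n f_{n,j}(x)t^n=u^jC(u)$ with $u=t(1+xt)$, and turns Lemma~\ref{rec_catalan} into $u(1-4u)G_j'(u)+[(1-j)+(4j-2)u]G_j(u)=u^j$. It then splits $u(1-4u)G_j'$ into a combination of $\partial F_j/\partial t$ and $\partial F_j/\partial x$ by the chain rule and extracts $[t^{n+1}]$. This derives the recurrence rather than checking it. It also handles every boundary case at once, since $r_{n+1,j}$ is just $[t^{n+1}]\,t^j(1+xt)^j$. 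Finally, it produces the identity $F_j=u^jC(u)$, which is reused for Proposition~\ref{prop:g1}. Your route needs the recurrence in hand beforehand, but it uses nothing beyond $(r+2)C_{r+1}=(4r+2)C_r$ and elementary binomial identities.

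There is one slip. At $r=-1$ you first write $a_{n,k-1}=C_0\binom{j}{n+1-j}$, but the correct value is $\binom{j}{n-j}=\binom{j}{k-1}$, which is the value you then use. Your ``should equal'' is confirmed by $(j-m)\binom{j}{m}=(m+1)\binom{j}{m+1}$ with $m=n-j$.

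You can also avoid both the division by $C_r\binom{n-k}{k}$ and the separate treatment of $k>n/2$. Set $N=n-k$. The identity $(N+2-k)\binom{N+1}{k-1}=k\binom{N+1}{k}$ merges your two $C_{r+1}$-terms into $(r+2)C_{r+1}\binom{N+1}{k}$. Now write $(r+2)C_{r+1}=(4r+2)C_r+\delta_{r,-1}$, valid for all integers $r$, and apply Pascal's rule. What remains is $2C_r\bigl[(N-k+1)\binom{N}{k-1}-k\binom{N}{k}\bigr]=0$, which holds for every $k$. So the defect in the coefficient of $x^k$ is exactly $\delta_{r,-1}\binom{N+1}{k}$. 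This equals $\binom{j}{n+1-j}$ at $k=n+1-j$ and $0$ otherwise, which is precisely $r_{n+1,j}$.
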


\begin{proof}
Let
\[
F_j(x,t) = \sum_{n \ge 0} f_{n,j}(x) \, t^n
\]
be the generating function of $f_{n,j}(x)$ in $t$. 
Using the convention $C_m=0$ for $m<0$ and setting $m=n-k-j$, we obtain
\begin{align*}
	F_j(x,t)
	&= \sum_{m\ge 0}\sum_{k=0}^{m+j}
	C_m\binom{m+j}{k}x^k t^{m+j+k} \\
	&= t^j \sum_{m\ge 0} C_m t^m
	\sum_{k=0}^{m+j}\binom{m+j}{k}(xt)^k \\[0.25cm]
	&= t^j(1+xt)^j\sum_{m\ge 0} C_m\bigl[t(1+xt)\bigr]^m .
\end{align*}

Let $u = t(1+xt)$, then
\begin{equation}\label{eq:FCu}
F_j(x,t) = u^j \cdot C(u).
\end{equation}
Let $G_j(u) \coloneqq F_j(x,t)=u^j C(u)$, so that 
\begin{equation}\label{eq:CG}
C(u) = u^{-j} G_j(u).   
\end{equation}
Differentiating both sides of equation \eqref{eq:CG} with respect to $u$, we obtain
\begin{equation}\label{eq:C'G'}
C'(u) = u^{-j} G_j'(u) - j u^{-j-1} G_j(u).
\end{equation}
Now substitute both equations \eqref{eq:CG} and \eqref{eq:C'G'} into the equation \eqref{eq:C'C}, we have
\begin{equation}\label{G'G}
u(1-4u) G_j'(u) + \bigl[(1-j) + (4j-2)u\bigr] G_j(u) = u^j.
\end{equation}

Since $F_j(x,t) = G_j(u)$ and $u = t(1+xt)$, 
the chain rule gives
\begin{equation}\label{eq:ft}
\frac{\partial F_j(x,t)}{\partial t} = G_j'(u) \cdot \frac{\partial u}{\partial t}=(1+2xt) G_j'(u),
\end{equation}
and 
\begin{equation}\label{eq:fx}
\frac{\partial F_j(x,t)}{\partial x} = G_j'(u) \cdot \frac{\partial u}{\partial x}= t^2 G_j'(u).
\end{equation}
Then,
\begin{align}\label{eq:G'F}
u(1-4u) G_j'(u) &= t(1+xt)\left[1-4t(1+xt)\right] G_j'(u) \nonumber\\
&= \left[ \bigl(t - (x+4)t^2 - 2xt^3\bigr)(1+2xt) + 2x(x+1)t^3 \right] G_j'(u) \nonumber\\
&= \bigl[t - (x+4)t^2 - 2xt^3\bigr] \frac{\partial F_j(x,t)}{\partial t} + 2x(x+1)t \frac{\partial F_j(x,t)}{\partial x}
\end{align}

Now replace $u$ with $t(1+xt)$ and substitute equation \eqref{eq:G'F} into equation \eqref{G'G}. 
We obtain the equation
\begin{align}\label{eq:Fxt}
&\bigl[t - (x+4)t^2 - 2xt^3\bigr] \frac{\partial F_j(x,t)}{\partial t}
+ 2x(x+1)t \frac{\partial F_j(x,t)}{\partial x} \nonumber \\
+&\ \bigl[(1-j) + (4j-2)t + (4j-2)xt^2\bigr] F_j(x,t)
= t^j(1+xt)^j.
\end{align}

We now extract the coefficient of $t^{n+1}$ from both sides of \eqref{eq:Fxt}.
For the right-hand side of $t^j (1 + xt)^j$, we have
\begin{align}\label{coe:R}
[t^{n+1}]t^j (1+xt)^j = [t^{n+1-j}](1+xt)^j=\binom{j}{n+1-j}x^{n+1-j}.
\end{align}
The coefficient of $t^{n+1}$ is exactly $r_{n+1,j}(x)$ as defined in the proposition.

We now compute the left-hand side coefficients for $t^{n+1}$.
Note that 
\[
\frac{\partial F_j(x,t)}{\partial t} = \sum_{n \ge 0} (n+1)f_{n+1,j}(x) \, t^n, \qquad
\frac{\partial F_j(x,t)}{\partial x} = \sum_{n \ge 0} f_{n,j}'(x) \, t^n.
\]
Then, the coefficient of $t^{n+1}$ in the left-hand side of equation \eqref{eq:Fxt} is 
\begin{align}\label{coe:L}
& (n + 2 - j) f_{n+1,j}(x)- \big[(x+4)n - 4j + 2\big] f_{n,j}(x) \nonumber\\
+&\  2x(x+1) f_{n,j}'(x) - 2x(n - 2j) f_{n-1,j}(x). 
\end{align}
Combining \eqref{coe:R} and \eqref{coe:L} yields the recurrence
\[
\begin{aligned}
(n + 2 - j) f_{n+1,j}(x)
={}& \big[(x+4)n - 4j + 2\big] f_{n,j}(x)- 2x(x+1) f_{n,j}'(x) \\
&+ 2x(n - 2j) f_{n-1,j}(x)+ r_{n+1,j}(x).
\end{aligned}
\]
For $n \ge 2j$, we have $n+1 > 2j$, so $r_{n+1,j}(x) = 0$.
This completes the proof.
\end{proof}

Replacing $x$ by $x-1$ in \eqref{rec:fnj} and using the identity $f_{n,j}'(x-1)=g_{n,j}'(x)$ we obtain the following recurrence relation for $g_{n,j}(x)$,
which plays a key role in proving the real-rootedness of $g_{n,j}(x)$ for $0\le j\le \lfr{n/2}$.
Note that $j\le \lfr{n/2}$ implies $n\ge 2j$.
This restriction is essential: after extending the definition to $0\le j\le n$, 
the polynomial $g_{n,j}(x)$ need not be real-rooted when $n<2j$. 
For example,
\[
g_{5,3}(x)=3x^2-2x+1,\quad
g_{6,4}(x)=6x^2-7x+3,\quad
g_{7,4}(x)=4x^3-2x^2+4x-1
\]
are not real-rooted. 
Thus, for the conjecture of Ehrenborg et. al, 
one has to consider the range $n\ge 2j$.

\begin{corollary}\label{cor_gnj}
Let $n\ge 2j$ be an integer.
The toric $g$-contribution polynomials $g_{n,j}(x)$ satisfy the recurrence relation 
\begin{equation}\label{rec_gnj}
\begin{aligned}
	(n + 2 - j) g_{n+1,j}(x)
	={}& \big[(x+3)n - 4j + 2\big] g_{n,j}(x) - 2x(x-1) g_{n,j}'(x) \\
	&+ 2(x-1)(n - 2j) g_{n-1,j}(x),\quad n\ge 2j+1,
\end{aligned}  
\end{equation}
where 
\[
g_{2j,j}(x)=\sum_{k=0}^{j}
C_{j-k}\binom{2j-k}{k}(x-1)^k,
\]
and 
\[
g_{2j+1,j}(x)=\sum_{k=0}^{j}
C_{j-k+1}\binom{2j-k+1}{k}(x-1)^k.\]
\end{corollary}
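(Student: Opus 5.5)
The corollary follows from \eqref{rec:fnj} of Proposition~\ref{rec:fnjr} by the substitution $x\mapsto x-1$. The two initial polynomials are read off directly from the definition \eqref{def:gnj}.

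For the recurrence, fix $n\ge 2j+1$; in particular $n\ge 2j$, so \eqref{rec:fnj} holds with no remainder term $r_{n+1,j}$. The relation \eqref{rec:fnj} is a polynomial identity in $x$, so we may evaluate it at $x-1$. By \eqref{eq:f-def}, $f_{m,j}(x-1)=g_{m,j}(x)$ for every $m$. Differentiating $g_{n,j}(x)=f_{n,j}(x-1)$ with the chain rule gives $f_{n,j}'(x-1)=g_{n,j}'(x)$. The coefficients transform as follows:
\begin{itemize}
\item $(x+4)n-4j+2$ becomes $(x+3)n-4j+2$;
\item $-2x(x+1)$ becomes $-2(x-1)x$;
\item $2x(n-2j)$ becomes $2(x-1)(n-2j)$.
\end{itemize}
The left-hand side $(n+2-j)f_{n+1,j}(x)$ becomes $(n+2-j)g_{n+1,j}(x)$. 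Putting these together gives exactly \eqref{rec_gnj}. The only bookkeeping point is that \eqref{rec:fnj} involves $f_{n-1,j}$, which requires $n-1\ge j$. This holds since $n\ge 2j+1$, so no convention issue arises.

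For the initial values, substitute $n=2j$ into \eqref{def:gnj}. The upper summation limit is $\min(\lfloor 2j/2\rfloor,2j-j)=j$, and the summand is $C_{2j-k-j}\binom{2j-k}{k}(x-1)^k=C_{j-k}\binom{2j-k}{k}(x-1)^k$, which is the stated formula for $g_{2j,j}(x)$. Now substitute $n=2j+1$. The upper limit is $\min(\lfloor (2j+1)/2\rfloor,j+1)=\min(j,j+1)=j$, and the summand becomes $C_{j+1-k}\binom{2j+1-k}{k}(x-1)^k$, as claimed.

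There is no genuine obstacle here. The only care needed is to confirm that the hypothesis $n\ge 2j$ of \eqref{rec:fnj} covers the range $n\ge 2j+1$ where \eqref{rec_gnj} is asserted. One should also note that the recurrence is only claimed from $n=2j+1$ onward, so both $g_{2j,j}$ and $g_{2j+1,j}$ must be supplied as seeds. This is why the corollary lists them explicitly: the recurrence at $n=2j$ would need $g_{2j-1,j}$, which lies outside the range $n\ge 2j$ where real-rootedness holds.
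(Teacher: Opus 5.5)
Your proof is correct and is exactly the paper's argument: substitute $x\mapsto x-1$ in \eqref{rec:fnj}, use $f_{n,j}'(x-1)=g_{n,j}'(x)$, and read the two seeds off \eqref{def:gnj}. One small aside concerns your closing remark: at $n=2j$ the coefficient $2(x-1)(n-2j)$ vanishes, so the recurrence there, which is valid by Proposition~\ref{rec:fnjr}, does not actually require $g_{2j-1,j}$.
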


\section{Real-rootedness}
A polynomial $f(x)$ with real coefficients is called \textit{real-rooted} if every root of $f(x)$ is real, or $f(x) \equiv 0$.

A real-rooted polynomial $g(x)$, with roots $\cdots \le s_2 \le s_1$, is said to \emph{interlace} a real-rooted polynomial $f(x)$, with roots $\cdots \le r_2 \le r_1$, if
\begin{align}\label{root:fg}
\cdots \le s_2 \le r_2 \le s_1 \le r_1.
\end{align}
We then write $g(x) \preceq f(x)$, see, for instance \cite[Section 7.8]{Bra15}. 
If no equality sign occurs in \eqref{root:fg}, 
then we say that $g(x)$ \emph{strictly interlaces} $f(x)$, denoted by $g(x) \prec f(x)$. For notational convenience, 
let $a \preceq bx + c$ for any real constants $a,b,c$ 
and $f(x) \preceq 0$, $0 \preceq f(x)$ for any real-rooted polynomial $f(x)$.

The following standard lemma will be applied several times in this paper.

\begin{lemma}(\cite[Theorem 2.3]{LW07})\label{rz:F}
Let $F, f, g_1, \dots, g_k$ be real polynomials satisfying the following conditions.
\begin{enumerate}
\item[(a)] $F(x) = a(x)f(x) + b_1(x)g_1(x) + \cdots + b_k(x)g_k(x)$, 
where $a(x), b_1(x), \dots, b_k(x)$ are real polynomials, 
such that $\deg F = \deg f$ or $\deg f + 1$.
\item[(b)] $f, g_j$ are real-rooted and $g_j \preccurlyeq f$ for each $j$.
\item[(c)] $F$ and $g_1, \dots, g_k$ have leading coefficients of the same sign.
\end{enumerate}

Suppose that $b_j(r) \le 0$ for each $j$ and each zero $r$ of $f$. 
Then, $F$ is real-rooted and $f \preccurlyeq F$. 
In particular, if for each zero $r$ of $f$
there exists an index $j$ such that $g_j \prec f$ and $b_j(r) < 0$, then $f \prec F$.
\end{lemma}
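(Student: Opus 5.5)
The plan is the classical sign-alternation argument: read off the signs of $F$ at the zeros of $f$ and use the intermediate value theorem. Without loss of generality I normalize all leading coefficients in (c) to be positive. Write $d=\deg f$.

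\textbf{Reduction to simple zeros.} First I reduce to the case where $f$ has only simple zeros. If $r$ is a zero of $f$ of multiplicity $m\ge 2$, then $g_j\preceq f$ forces $r$ to be a zero of every $g_j$ of multiplicity at least $m-1$. By (a), $(x-r)^{m-1}$ then divides $F$ as well. Dividing $F,f,g_1,\dots,g_k$ by the common factor $(x-r)^{m-1}$ preserves (a)--(c), the interlacing relations, and the sign hypothesis on the $b_j$. Moreover, $f\preceq F$ for the reduced polynomials implies $f\preceq F$ for the original ones, since we only reinsert a repeated common root. So assume $f$ has simple zeros $r_1>r_2>\cdots>r_d$.

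\textbf{Signs at the zeros of $f$.} Since $g_j\preceq f$ and $g_j$ has positive leading coefficient and degree $d$ or $d-1$, each zero of $g_j$ lies in some $[r_{i+1},r_i]$ or below $r_d$. Hence $(-1)^{i-1}g_j(r_i)\ge 0$ for every $i$. Evaluating (a) at $r_i$ kills the term $a f$, so
\[
(-1)^{i}F(r_i)=\sum_j \bigl(-b_j(r_i)\bigr)(-1)^{i-1}g_j(r_i)\ \ge 0 .
\]
In the strict setting we have $g_j\prec f$, hence $g_j(r_i)\neq 0$, and $b_j(r_i)<0$ for some $j$. This makes the inequality strict: $(-1)^iF(r_i)>0$.

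\textbf{Strict case by the intermediate value theorem.}
\begin{itemize}
\item $F(r_1)<0$ and $F(+\infty)>0$, so $F$ has a zero in $(r_1,\infty)$.
\item $F$ changes sign on each $(r_{i+1},r_i)$, giving a zero in each of these $d-1$ intervals.
\item If $\deg F=d+1$, then $F(r_d)$ has sign $(-1)^d$ while $F(-\infty)$ has sign $(-1)^{d+1}$, giving one more zero in $(-\infty,r_d)$.
\end{itemize}
This produces $\deg F$ real zeros, so $F$ is real-rooted and the zeros are exactly placed as required for $f\prec F$.

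\textbf{Non-strict case by perturbation.} This is the step I expect to need the most care, since some $F(r_i)$ may vanish and the root count then needs multiplicities. I would perturb: replace $b_1$ by $b_1-\varepsilon$ and $g_1$ by a polynomial $g_1^\varepsilon\prec f$ with positive leading coefficient, for example $g_1^\varepsilon=g_1+\varepsilon f'$ suitably adjusted so that strict interlacing holds. Then, for $\varepsilon>0$, each $r_i$ has an index with strict conditions. The strict case gives $f\prec F_\varepsilon$, and the degree of $F_\varepsilon$ agrees with that of $F$ for small $\varepsilon$. As $\varepsilon\to0$, $F_\varepsilon\to F$ coefficientwise. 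By continuity of zeros (Hurwitz's theorem), the limit $F$ is real-rooted, and the strict inequalities between the zeros of $f$ and $F_\varepsilon$ become non-strict in the limit, giving $f\preceq F$.
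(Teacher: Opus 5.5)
\textbf{Verdict: there is a genuine gap, in the non-strict (perturbation) step; the first three steps are correct.} The paper does not prove this lemma; it quotes it from Liu--Wang. Your normalization, the reduction to simple zeros by dividing out $(x-r)^{m-1}$, the evaluation $(-1)^iF(r_i)=\sum_j(-b_j(r_i))(-1)^{i-1}g_j(r_i)\ge 0$, and the intermediate-value count in the strict case all work.

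The failing claim is that $\deg F_\varepsilon=\deg F$ for small $\varepsilon$. Write $g_1^\varepsilon=g_1+\varepsilon h$. Then $F_\varepsilon-F=\varepsilon(b_1-\varepsilon)h-\varepsilon g_1$, which contains the term $\varepsilon b_1h$ of degree $\deg b_1+\deg h$.
\begin{itemize}
\item The lemma puts no bound on $\deg b_j$. Only $\deg F$ is constrained, and $af$ and $b_jg_j$ may cancel in high degree.
\item $h$ cannot in general have low degree, because it must satisfy $(-1)^{i-1}h(r_i)>0$ at every $r_i$ where $g_1$ vanishes.
\end{itemize}
So conditions (a) and (c) can fail for $F_\varepsilon$, and the strict case cannot be invoked.

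Here is a concrete instance. Take $f=x^2-x$, $g_1=x$, $b_1=-x^{10}$ and $a=2+x+\cdots+x^9$. These satisfy all hypotheses, with $F=x^2-2x$. Since $b_1(0)=0$, this is the non-strict case. Your choice $g_1^\varepsilon=x+\varepsilon f'=x+\varepsilon(2x-1)\prec f$ gives
$F_\varepsilon=-2\varepsilon x^{11}+\varepsilon x^{10}+x^2-(2+\varepsilon+2\varepsilon^2)x+\varepsilon^2$.
This has degree $11$ and a negative leading coefficient. It is not even real-rooted: its second derivative $2+90\varepsilon x^8-220\varepsilon x^9$ has at most one real zero by Descartes' rule.

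The repair is to perturb $F$ itself rather than $g_1$, since after evaluating at the $r_i$ all you use is $(-1)^iF(r_i)\ge 0$.
\begin{itemize}
\item Let $\sigma$ be the sign of the leading coefficient of $f$, and set $F_\varepsilon=F-\varepsilon\sigma f'$. Equivalently, add an extra summand with $b_{k+1}\equiv-\varepsilon$ and $g_{k+1}=\sigma f'\prec f$.
\item Since $\deg f'<\deg f\le\deg F$, $F_\varepsilon$ has the same degree and leading coefficient as $F$.
\item Because $f$ has simple zeros $r_1>\cdots>r_d$, the sign of $f'(r_i)$ is $\sigma(-1)^{i-1}$. Hence $(-1)^iF_\varepsilon(r_i)=(-1)^iF(r_i)+\varepsilon|f'(r_i)|>0$.
\item Your strict-case argument now gives $f\prec F_\varepsilon$.
\item With degree and leading coefficient fixed, the zeros of $F_\varepsilon$ stay bounded and converge to those of $F$ as $\varepsilon\to 0$. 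The strict inequalities become weak in the limit, so $F$ is real-rooted and $f\preceq F$.
\end{itemize}
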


Before proving that $g_{n,j}(x)$ is real-rooted for $n\ge 2j$, 
we first consider the real-rootedness of $f_{2j,j}(x)$ and $f_{2j+1,j}(x)$. 
For convenience, we record their explicit formulas here.
\[
f_{2j,j}(x)=\sum_{k=0}^j C_{j-k}\binom{2j-k}{k}x^k,\quad
f_{2j+1,j}(x)=\sum_{k=0}^j C_{j-k+1}\binom{2j-k+1}{k}x^k.
\]
For small values of $j$, 
these polynomials are as follows:
$$f_{0,0}(x)=1, \quad f_{2,1}(x)=1+x, \quad f_{4,2}(x)=2+3x+x^2=(x+1)(x+2)$$ 
and 
$$f_{1,0}(x)=1, \quad f_{3,1}(x)=2+2x, \quad f_{5,2}(x)=5+8x+3x^2=(x+1)(3x+5).$$

\begin{lemma}\label{pj<qj}
Let $j\ge 0$ be an integer.
The polynomials $f_{2j,j}(x)$ and $f_{2j+1,j}(x)$ are both real-rooted and
$f_{2j,j}(x)\preceq f_{2j+1,j}(x)$.
\end{lemma}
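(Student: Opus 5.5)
The plan is an induction on $j$ with two coupled steps. Write $p_j(x)=f_{2j,j}(x)$ and $q_j(x)=f_{2j+1,j}(x)$. Step one expresses $q_j$ through $p_j$ and $p_j'$ alone, and shows that real-rootedness of $p_j$ forces $p_j\preceq q_j$. Step two obtains $p_j$ from $p_{j-1}$ and $q_{j-1}$ by Lemma \ref{rz:F}. Throughout I use that all coefficients are positive. In particular $p_j(0)=C_j>0$ and $q_j(0)=C_{j+1}>0$, so every real root lies in $(-\infty,0)$.

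\emph{Step 1: $q_j$ from $p_j$.} Apply Proposition \ref{rec:fnjr} with $n=2j$. Then $n-2j=0$, so the $f_{n-1,j}$ term vanishes, and $r_{2j+1,j}=0$. This gives
\[
(j+2)\,q_j(x)=\bigl(2jx+4j+2\bigr)p_j(x)-2x(x+1)\,p_j'(x).
\]
Since $\frac{jx+2j+1}{x(x+1)}=\frac{2j+1}{x}-\frac{j+1}{x+1}$, this rewrites as
\[
(j+2)\,q_j(x)=-2x^{2j+2}(x+1)^{-j}\,\frac{d}{dx}\Bigl[\phi_j(x)\Bigr],\qquad \phi_j(x)=\frac{p_j(x)(x+1)^{j+1}}{x^{2j+1}}.
\]
The exponent of $x+1$ is a nonnegative integer, so $\phi_j$ is smooth on $(-\infty,0)$ and $-1$ causes no trouble.

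Now suppose $p_j$ is real-rooted with roots $s_j\le\cdots\le s_1<0$. Apply Rolle's theorem to $\phi_j$ on $(-\infty,0)$.
\begin{itemize}
\item Between consecutive distinct roots of $p_j$ there is a zero of $\phi_j'$, hence a zero of $q_j$.
\item A root of multiplicity $m$ of $p_j$ is a root of multiplicity at least $m-1$ of $q_j$. At $x=-1$ one has to check that the extra factor $(x+1)^{j+1}$ is compensated by the prefactor $(x+1)^{-j}$.
\item As $x\to-\infty$, $\phi_j(x)$ tends to the nonzero constant given by the leading coefficient of $p_j$, since the degrees are $j+(j+1)-(2j+1)=0$. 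Also $\phi_j(s_j)=0$ and $\phi_j$ is not constant, so $\phi_j'$ has a zero in $(-\infty,s_j)$.
\end{itemize}
This yields $j=\deg q_j$ real roots. Hence $q_j$ is real-rooted, with one root to the left of $s_j$ and one in each gap $[s_{i+1},s_i]$. That is exactly $p_j\preceq q_j$.

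\emph{Step 2: $p_j$ from $p_{j-1},q_{j-1}$.} Lemma \ref{rec_g}, translated by $x\mapsto x+1$, reads $f_{n,j}=f_{n-1,j-1}+x f_{n-2,j-1}$. Taking $n=2j$ with $j\ge1$ gives
\[
p_j(x)=q_{j-1}(x)+x\,p_{j-1}(x).
\]
By induction $p_{j-1}\preceq q_{j-1}$ and both are real-rooted. Apply Lemma \ref{rz:F} with $F=p_j$, $f=q_{j-1}$, $a=1$, $g_1=p_{j-1}$ and $b_1=x$. The degree condition holds because $\deg p_j=j=\deg q_{j-1}+1$, and all leading coefficients are positive. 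At every root $r$ of $q_{j-1}$ we have $r<0$, so $b_1(r)\le0$. Therefore $p_j$ is real-rooted, and Step 1 then gives $p_j\preceq q_j$. The base case $p_0=q_0=1$ is trivial.

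The main obstacle is Step 1. The naive use of Lemma \ref{rz:F} fails there: the coefficient $-2x(x+1)$ of $p_j'$ is positive at roots below $-1$, and such roots do occur, e.g. $-5/3$ for $f_{5,2}$. The factorisation through $\phi_j$ avoids this. The care needed is in the root count with multiplicities (including a possible root at $-1$) and in checking that the extra root of $q_j$ lies to the left of $p_j$'s roots, which is the side required by the definition of $\preceq$.
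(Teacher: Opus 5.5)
Your Step 2 is correct, but Step 1 has a genuine gap. The third bullet is false. Since $\phi_j(x)\to 1$ as $x\to-\infty$ while $\phi_j(s_j)=0$, Rolle's theorem gives nothing on $(-\infty,s_j]$, and in fact $\phi_j'$ has no zero there. For $j=2$ one has $\phi_2=(x+1)^4(x+2)/x^5$ and $\phi_2'=-2(x+1)^3(3x+5)/x^6$, whose only zeros are $-1$ and $-5/3\in(-2,-1)$. Correspondingly, $q_2=(x+1)(3x+5)$ has no root to the left of $-2$. The root your count is missing sits at $x=-1$. Write $p_j=(x+1)^m h$ with $h(-1)\neq 0$. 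Then $(j+2)q_j=(x+1)^m[(2jx+4j+2-2mx)h-2x(x+1)h']$, and the bracket equals $(2j+2+2m)h(-1)\neq 0$ at $x=-1$. So $q_j$ vanishes there to order exactly $m$, not $m-1$. (If $p_j(-1)\neq 0$, the extra root is instead a Rolle zero in a gap adjacent to $-1$.) Hence the real-rootedness of $q_j$ can be rescued.

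The interlacing cannot be rescued this way. In the paper's convention, $p_j\preceq q_j$ means $s_j\le r_j\le\cdots\le s_1\le r_1$, so the largest root must belong to $q_j$. The configuration you describe, with a root of $q_j$ to the left of $s_j$, is $q_j\preceq p_j$. Which side the extra root falls on depends on where the roots of $p_j$ sit relative to $-1$. For $p=x+a$, the same formula gives $3q=(2a+4)x+6a$, with root $-3a/(a+2)$; this lies to the left of $-a$ precisely when $0<a<1$. So real-rootedness and negativity of the roots of $p_j$ do not force $p_j\preceq q_j$. You need all roots of $p_j$ to lie in $(-\infty,-1]$, and without $p_{j-1}\preceq q_{j-1}$ your Step 2 has no valid input at the next level.

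This information is available from \eqref{g0>0}: $g_{n,j}>0$ on $(0,\infty)$ means $f_{n,j}>0$ on $(-1,\infty)$ for $n=2j,2j+1$. With it, your dismissal of the direct approach turns out to have the sign backwards, since $-2r(r+1)\le 0$ exactly for $r\le -1$. So Lemma \ref{rz:F} with $g_1=p_j'\preceq p_j$ and $b_1=-2x(x+1)/(j+2)$ yields $p_j\preceq q_j$ immediately, and $\phi_j$ is not needed.

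The paper avoids root location altogether. It combines $p_j=q_{j-1}+xp_{j-1}$ with the coefficient identity $(j+2)q_j=2(2j+1)p_j+(j-1)xq_{j-1}$. This gives $(j+2)q_j=[(j-1)x+2(2j+1)]p_j-(j-1)x^2p_{j-1}$ and a three-term recurrence for $p_j$. In both, the coefficient $-(j-1)x^2$ is nonpositive everywhere.
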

\begin{proof}
For convenience, let $p_j(x) \coloneqq f_{2j,j}(x)$  and $q_j(x) \coloneqq f_{2j+1,j}(x)$.  
It is obvious that $\deg(p_j(x))=\deg(q_j(x))=j$.
We first consider the recurrence relations for $p_j(x)$ and $q_j(x)$.

After the change of variables $f_{n,j}(x)=g_{n,j}(x+1)$, the recurrence \eqref{rec_g3} becomes
\[
f_{n,j}(x)=f_{n-1,j-1}(x)+x f_{n-2,j-1}(x).
\]
Taking $n=2j$, we get
\[
f_{2j,j}(x)=f_{2j-1,j-1}(x)+x f_{2j-2,j-1}(x)
=f_{2(j-1)+1,j-1}(x)+x f_{2(j-1),j-1}(x),
\]
which implies that $p_j(x)$ satisfies the recurrence
\begin{equation}\label{rec:pj} 
p_j(x) = q_{j-1}(x) + xp_{j-1}(x), \quad j \ge 1, 
\end{equation}
with $p_0(x)=q_0(x)=1$.

Also, by direct calculation, we have
\begin{align*}
(j+2) C_{j-k+1}\binom{2j-k+1}{k} &=  \frac{(j+2)(2j-2k+2)!}{(j-k+2)!(j-k+1)!} \cdot \frac{(2j-k+1)!}{k!(2j-2k+1)!} \\
&= \frac{(j+2)(2j-k+1)}{j-k+2} \cdot \frac{2(2j-k)!}{k!(j-k)!(j-k+1)!}\\
&= \left[ (2j+1) + \frac{k(j-1)}{j-k+2}\right] \cdot \frac{2(2j-k)!}{k!(j-k)!(j-k+1)!}.
\end{align*}
Note that 
\begin{align*} 
(2j+1)\frac{2(2j-k)!}{k!(j-k)!(j-k+1)!} 
&= 2(2j+1) \left[ \frac{(2j-2k)!}{(j-k+1)!(j-k)!} \cdot \frac{(2j-k)!}{k!(2j-2k)!} \right] \\
&= 2(2j+1) C_{j-k}\binom{2j-k}{k}
\end{align*}
and 
\begin{align*}
\frac{k(j-1)}{j-k+2} \cdot \frac{2(2j-k)!}{k!(j-k)!(j-k+1)!} 
&= (j-1)C_{j-k+1}\binom{2j-k}{k-1}.
\end{align*}
Thus,
\begin{equation*} 
(j+2) C_{j-k+1}\binom{2j-k+1}{k}=2(2j+1) C_{j-k}\binom{2j-k}{k}+(j-1)C_{j-k+1}\binom{2j-k}{k-1}.
\end{equation*}
Multiplying this identity by $x^k$ and summing over $0\le k\le j$, we obtain the recurrence
\begin{equation}\label{rec:qj}
(j + 2)q_j(x) = 2(2j + 1)p_j(x) + (j - 1)xq_{j-1}(x), \quad j \ge 1 
\end{equation}
with $p_0(x)=q_0(x)=1$.

Combining \eqref{rec:pj} and \eqref{rec:qj}, 
we obtain the following three-term recurrences:
\begin{align}\label{eq:p_expanded}
(j + 2)p_{j+1}(x) &= (2j + 1)(x + 2)p_j(x) - (j - 1)x^2 p_{j-1}(x), \quad j\ge 1
\end{align}
with $p_0(x)=1, p_1(x)=1+x$,
and 
\begin{align}\label{eq:q_expanded}
(j + 3)(2j + 1)q_{j+1}(x) 
={}& 2\big[(2j+3)(2j+1) + (2j^2 + 4j + 3)x\big]q_j(x) \nonumber \\
& - (j - 1)(2j + 3)x^2 q_{j-1}(x), \quad j\ge 1
\end{align}
with $q_0(x)=1, q_1(x)=2+2x$.

We prove that $p_j(x)$ is real-rooted by induction on $j$.
The initial case is $p_0(x)=1\preceq p_1(x)=1+x$. 
Assume that $p_{j-1}(x)\preceq p_j(x)$ and that both polynomials are real-rooted. 
From \eqref{eq:p_expanded},
\[
p_{j+1}(x)
=\frac{(2j+1)(x+2)}{j+2}p_j(x)
-\frac{(j-1)x^2}{j+2}p_{j-1}(x),\qquad j\ge 1.
\]
Let $r$ be a zero of $p_j(x)$. Then
\[
-\frac{(j-1)r^2}{j+2}\le 0.
\]
Moreover, $\deg p_{j+1}=\deg p_j+1$, and all polynomials involved have positive leading coefficients. 
Therefore, Lemma~\ref{rz:F}  implies that $p_{j+1}(x)$ is real-rooted and that
\[
p_j(x)\preceq p_{j+1}(x).
\]
The proof for $q_j(x)$ is analogous to that for $p_j(x)$ and is omitted.

It remains to prove $p_j(x)\preceq q_j(x)$. The cases $j=0$ and $j=1$ are immediate, since $p_0=q_0=1$ and $q_1=2p_1$. For $j\ge 2$, \eqref{rec:pj} and \eqref{rec:qj} give
\[
(j+2)q_j(x)=\big[(j-1)x+2(2j+1)\big]p_j(x)-(j-1)x^2p_{j-1}(x), \qquad j\ge 1.
\]
By the result already proved, $p_{j-1}(x)\preceq p_j(x)$. For every zero $r$ of $p_j(x)$,
\[
-(j-1)r^2\le 0.
\]
Applying Lemma~\ref{rz:F} once more gives
\[
p_j(x)\preceq q_j(x).
\]
This completes the proof.

\end{proof}

We are now ready to prove Theorem~\ref{thm:main}.

\begin{proof}[Proof of Theorem~\ref{thm:main}.]

Fix $j\ge 0$. Let $n\ge 2j$. 
We proceed by induction on  $n$.
The initial cases $g_{2j,j}(x)$ and  $g_{2j+1,j}(x)$ are both real-rooted and
\[
g_{2j,j}(x)\preceq g_{2j+1,j}(x),
\]
which follows from Lemma~\ref{pj<qj}. 
Assume now that $n\ge 2j+1$ and that
\[
g_{n-1,j}(x)\preceq g_{n,j}(x),
\]
with both polynomials real-rooted. We rewrite \eqref{rec_gnj} as
\[
g_{n+1,j}(x)
=a_n(x)g_{n,j}(x)+b_n(x)g_{n,j}'(x)+c_n(x)g_{n-1,j}(x),
\]
where
\[
a_n(x)=\frac{nx+3n-4j+2}{n+2-j},\quad
b_n(x)=-\frac{2x(x-1)}{n+2-j},\quad
c_n(x)=\frac{2(x-1)(n-2j)}{n+2-j}.
\]
If $g_{n,j}'(x)\not\equiv 0$, Rolle's theorem gives
\[
g_{n,j}'(x)\preceq g_{n,j}(x).
\]
If $g_{n,j}'(x)\equiv 0$, the derivative term is absent. 
Let $r$ be any zero of $g_{n,j}(x)$. 
Since $g_{n,j}(x)$ has nonnegative coefficients and is real-rooted, $r\le 0$. 
Hence,
\[
b_n(r)=-\frac{2r(r-1)}{n+2-j}\le 0,\qquad
c_n(r)=\frac{2(r-1)(n-2j)}{n+2-j}\le 0,
\]
because $n\ge 2j+1$. 
Also $\deg g_{n+1,j}=\deg g_{n,j}$ or $\deg g_{n,j}+1$ follows from the definition
and all nonzero polynomials involved have positive leading coefficients. 
Lemma~\ref{rz:F} implies that $g_{n+1,j}(x)$ is real-rooted and $g_{n,j}(x)\preceq g_{n+1,j}(x)$.
This proves that
\[
g_{n,j}(x)\preceq g_{n+1,j}(x)
\]
for all $0\le j\le \lfloor n/2\rfloor$.

It remains to prove the second interlacing relation. 
Let $ 0\le j\le \lfr{(n-1)/2}$, so that $n\ge 2j+1$. 
Applying \eqref{rec_g3} with $n+1$ and $j+1$ in place of $n$ and $j$, respectively, gives
\[
g_{n+1,j+1}(x)=g_{n,j}(x)+(x-1)g_{n-1,j}(x).
\]
By the first part of the proof,
\[
g_{n-1,j}(x)\preceq g_{n,j}(x).
\]
Let $r$ be any zero of $g_{n,j}(x)$. 
As above, $r\le 0$, and hence $ r-1<0$.
Moreover,
$\deg g_{n+1,j+1}=\deg g_{n,j}$ or $\deg g_{n,j}+1$.
Applying Lemma~\ref{rz:F} 
we obtain
\[
g_{n,j}(x)\preceq g_{n+1,j+1}(x).
\]
The proof is complete.
\end{proof}

Furthermore, we have the following result.

\begin{proposition}\label{prop:g1}
Let $j\ge 0$. For $n \ge 2j$, 
\begin{equation*}
g_{n,j}(0) = \begin{cases}
	1, & \text{if } j = 0, \\
	0, & \text{if } j \ge 1.
\end{cases}
\end{equation*}
\end{proposition}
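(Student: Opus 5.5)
We need $g_{n,j}(0)=f_{n,j}(-1)$. Since $f_{n,j}(x)=g_{n,j}(x+1)$, evaluating at $0$ gives
\[
g_{n,j}(0)=\sum_{k=0}^{\min(\lfloor n/2\rfloor,n-j)} (-1)^k C_{n-k-j}\binom{n-k}{k}.
\]
The cleanest route is the generating function from the proof of Proposition~\ref{rec:fnjr}. There we have $F_j(x,t)=u^jC(u)$ with $u=t(1+xt)$, and this is valid for all $n\ge 0$ under the convention $C_r=0$ for $r<0$.

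Setting $x=-1$ gives $u=t(1-t)$. The key observation is that $1-4u=1-4t+4t^2=(1-2t)^2$. Hence $\sqrt{1-4u}=1-2t$ as formal power series, since the constant term is $1$. Then
\[
C(u)=\frac{1-(1-2t)}{2t(1-t)}=\frac{1}{1-t}.
\]
Equivalently, $y=1/(1-t)$ satisfies $y=1+t(1-t)y^2$, which one can check directly and which pins down the power series solution of \eqref{eq:C}. Therefore
\[
\sum_{n\ge0} g_{n,j}(0)\,t^n = F_j(-1,t)=t^j(1-t)^{j-1}.
\]

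We now read off coefficients.

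For $j=0$, the series is $(1-t)^{-1}=\sum_n t^n$, so $g_{n,0}(0)=1$ for every $n\ge 0$.

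For $j\ge1$, the series $t^j(1-t)^{j-1}$ is a polynomial of degree $2j-1$. Its coefficient of $t^n$ therefore vanishes for all $n\ge 2j$, so $g_{n,j}(0)=0$.

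One point needs checking: the coefficient extraction must match the truncated sum in \eqref{def:gnj}. The upper limit $\min(\lfloor n/2\rfloor,n-j)$ agrees with the generating-function expansion. The bound $k\le n-j$ comes from $C_{n-k-j}=0$ otherwise, and $k\le\lfloor n/2\rfloor$ is automatic because $\binom{n-k}{k}=0$ for $k>n/2$. This is exactly how $F_j$ was computed in the proof of Proposition~\ref{rec:fnjr}, so substituting $x=-1$ is legitimate.

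I do not expect a real obstacle. The only delicate points are the choice of branch of the square root, which is fixed by the constant term $1$, and the remark that the restriction $n\ge 2j$ is exactly what kills the polynomial part for $j\ge1$. As a fallback, induction via \eqref{rec_g3} at $x=0$ also works: $g_{n,j}(0)=g_{n-1,j-1}(0)-g_{n-2,j-1}(0)$. For $j=1$ this gives $1-1=0$ when $n-2\ge0$. For $j\ge2$ both terms vanish by induction, since $n-2\ge 2(j-1)$. The base values $g_{n,0}(0)=1$ can be checked from \eqref{def:gnj} or from the generating function above.
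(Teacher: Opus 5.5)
Your proposal is correct and follows essentially the same route as the paper: specialize $F_j(x,t)=u^jC(u)$ at $x=-1$, use $C\bigl(t(1-t)\bigr)=(1-t)^{-1}$ to get $F_j(-1,t)=t^j(1-t)^{j-1}$, and read off the coefficient of $t^n$ for $n\ge 2j$. Your observation that $1-4t(1-t)=(1-2t)^2$ fixes the branch of the square root and justifies the formal power series identity that the paper simply asserts.
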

\begin{proof}
Since \(g_{n,j}(x)=f_{n,j}(x-1)\), it suffices to show that
$f_{n,j}(-1)=1$ for $j=0$ and $f_{n,j}(-1)=0$ for $j\ge 1$.
Evaluating \eqref{eq:FCu} at \(x=-1\), 
we get
\begin{equation*}
	F_j(-1,t)=t^j(1-t)^j C\bigl(t(1-t)\bigr).
\end{equation*}
Substituting \(u=t(1-t)\) into \eqref{eq:Cu} we get, 
as an identity of formal power series,
\[
C\bigl(t(1-t)\bigr)=(1-t)^{-1}.
\]
Hence,
\begin{equation*}
F_j(-1,t)=t^j(1-t)^j(1-t)^{-1}=t^j(1-t)^{j-1}.
\end{equation*}
The desired value $f_{n,j}(-1)$ is precisely the coefficient of $t^n$ in the expansion of $F_j(-1, t)$. 
We consider two cases based on the value of $j$:

If $j = 0$, 
then $F_0(-1,t) = t^0(1-t)^{-1} = \sum_{n \ge 0} t^n$.
Thus, for any $n \ge 2j = 0$, we have $f_{n,0}(-1) = 1$.

If $j \ge 1$, 
then $F_j(-1,t) = t^j (1-t)^{j-1}$ is a polynomial in $t$ of degree \(2j-1\).
Consequently, the coefficient of $t^n$ must be $0$ for $n\ge 2j$.
 This completes the proof.
\end{proof}

Proposition \ref{prop:g1} implies $x=0$ is a root of $g_{n,j}(x)$ for $j\ge 1,n\ge 2j$.
Moreover, Proposition~\ref{prop:g1} and the definition of \(g_{n,j}(x)\) imply the following identities.

\begin{corollary}
	For \(n\ge 0\), we have
	\[
	\sum_{k=0}^{\lfloor n/2\rfloor}
	(-1)^k \binom{n-k}{k}C_{n-k}=1.
	\]
	For \(j\ge 1\) and \(n\ge 2j\), we have
	\[
	\sum_{k=0}^{\lfloor n/2\rfloor}
	(-1)^k \binom{n-k}{k}C_{n-k-j}=0.
	\]
\end{corollary}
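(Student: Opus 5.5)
This corollary follows by evaluating the defining sum \eqref{def:gnj} at $x=0$ and comparing with Proposition~\ref{prop:g1}. Setting $x=0$ in \eqref{def:gnj} turns each factor $(x-1)^k$ into $(-1)^k$, so
\[
g_{n,j}(0)=\sum_{k=0}^{\min(\lfloor n/2\rfloor,\,n-j)} (-1)^k C_{n-k-j}\binom{n-k}{k}.
\]

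For $j=0$ we have $\min(\lfloor n/2\rfloor,n)=\lfloor n/2\rfloor$, so the sum is exactly the first displayed sum. The hypothesis $n\ge 2j=0$ always holds, so Proposition~\ref{prop:g1} gives the value $1$.

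For $j\ge 1$ and $n\ge 2j$, note that $n-j\ge n/2\ge\lfloor n/2\rfloor$. Hence the upper limit $\min(\lfloor n/2\rfloor,n-j)$ equals $\lfloor n/2\rfloor$, and the sum coincides with the second displayed sum. With the convention $C_r=0$ for $r<0$ adopted earlier, this step would not matter anyway, since any extra terms vanish. Proposition~\ref{prop:g1} then gives the value $0$.

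No real obstacle arises. The only point needing care is checking that the summation ranges agree, which is the inequality $n-j\ge\lfloor n/2\rfloor$ used above.
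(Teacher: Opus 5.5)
Your proposal is correct and is the paper's own argument: evaluate the defining sum \eqref{def:gnj} at $x=0$ and apply Proposition~\ref{prop:g1}. You also spell out the check, left implicit in the paper, that the upper limit $\min(\lfloor n/2\rfloor,n-j)$ equals $\lfloor n/2\rfloor$ when $n\ge 2j$.
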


\section{Conclusion}
A sequence $(f_0(x), f_1(x), \dots, f_m(x))$ of real-rooted polynomials is called \textit{interlacing} 
if $f_i(x)$ interlaces $f_j(x)$ for all $0 \le i < j \le m$. 
Let $(f_0(x), f_1(x), \dots, \\f_m(x))$ be an interlacing sequence of real-rooted polynomials with positive leading coefficients.
It is well known \cite[Lemma 2.6]{BB10}  that 
every nonnegative linear combination $f(x)$ of $f_0(x), f_1(x), \cdots, f_m(x)$ is real-rooted and satisfies
\[
f_0(x)\preceq f(x)\preceq f_m(x).
\]

Ehrenborg, Hetyei and Readdy showed that the toric \(g\)-polynomial of an \(n\)-dimensional simple polytope \(P\) can be written as
\begin{equation*}
g(P, x) = \sum_{j=0}^{\lfr{n/2}} \gamma_j \cdot g_{n,j}(x),
\end{equation*}
where $(\gamma_0, \gamma_1, \dots, \gamma_{\lfr{n/2}})$ is the $\gamma$-vector of the simple polytope $P$.
They also proposed the following real-rootedness conjecture for the the \(g\)-polynomials of cyclohedron $\mathcal{C}_n$ and chordal nestohedra $P_{\mathcal{B}}$.
\begin{conjecture}(\cite[Conjecture 11.2]{EHR25})\label{EHR2}
The $g$-polynomials of the $n$-dimensional cyclohedron and chordal nestohedra are real-rooted.  
\end{conjecture}

The $\gamma$-vector of the $n$-dimensional cyclohedron $\mathcal{C}_n$ is given by \cite[Proposition 11.15]{PRW08} 
\[
\gamma_j(\mathcal{C}_n)=\binom{n}{2j}\binom{2j}{j}.
\]
For an $n$-dimensional chordal nestohedron $P_{\mathcal{B}}$, 
the entry $\gamma_j$ is the number of right-adjusted Foata-Strehl trees representing a $\mathcal{B}$-permutation and having $j$ forks \cite[Corollary 10.5]{EHR25}.
In both cases, the $\gamma$-vector is nonnegative.

At the suggestion of Christos Athanasiadis, 
we used Mathematica to verify that the sequence
\[
\bigl(g_{n,0}(x),g_{n,1}(x),\ldots,g_{n,\lfloor n/2\rfloor}(x)\bigr)
\]
is interlacing for all \(n\le 120\).
Although this verification is finite, 
the data suggests that the same property may persist for all \(n\).
Athanasiadis further suggested that we formulate 
the following conjecture.

\begin{conjecture} \label{Conjg}
The polynomial sequence $(g_{n,0}(x),g_{n,1}(x),\dots,g_{n,\lfr{n/2}}(x))$ is interlacing for every $n\ge 0$.
\end{conjecture}

If Conjecture~\ref{Conjg} holds, 
then \cite[Lemma 2.6]{BB10} implies that the toric \(g\)-polynomial of every \(n\)-dimensional simple polytope with nonnegative \(\gamma\)-vector is real-rooted. 
In particular, it would imply Conjecture~\ref{EHR2}.

\medskip
\noindent
\textbf{Acknowledgements}. 
The author wishes to thank Christos Athanasiadis for suggesting the problem, helpful discussions, and valuable comments and suggestions on an earlier version of this manuscript, and Yanxin Liu and Xue Yan for helpful discussions.
The author acknowledges support from the China Scholarship Council (No. 202306060164) for the visit to the Department of Mathematics of the National and Kapodistrian University of Athens during the academic year 2025--26.

\bibliography{ref}

@book{Ath26,
	author    = {Christos A. Athanasiadis},
	title     = {Discrete Mathematics: A Combinatorial Approach},
	series    = {Undergraduate Texts in Mathematics},
	publisher = {Springer},
	year      = {2026},
}

@article {BB10,
AUTHOR = {Borcea, Julius Bogdan and Br\"and\'en, Petter},
TITLE = {Multivariate {P}\'olya-{S}chur classification problems in the
{W}eyl algebra},
JOURNAL = {Proc. Lond. Math. Soc. (3)},
FJOURNAL = {Proceedings of the London Mathematical Society. Third Series},
VOLUME = {101},
YEAR = {2010},
NUMBER = {1},
PAGES = {73--104},
}

@incollection {Bra15,
AUTHOR = {Br\"and\'en, Petter},
TITLE = {Unimodality, log-concavity, real-rootedness and beyond},
BOOKTITLE = {Handbook of enumerative combinatorics},
SERIES = {Discrete Math. Appl. (Boca Raton)},
PAGES = {437--483},
PUBLISHER = {CRC Press, Boca Raton, FL},
YEAR = {2015},
}

@article{EHR25,
  title={The Toric $g$-Vector of Nestohedra},
  author={Ehrenborg, Richard and Hetyei, G{\'a}bor and Readdy, Margaret},
  journal={arXiv preprint arXiv:2511.04815},
  year={2025}
}

@article{Gal05,
AUTHOR = {Gal, \'Swiatos\l aw R.},
TITLE = {Real root conjecture fails for five- and higher-dimensional
spheres},
JOURNAL = {Discrete Comput. Geom.},
FJOURNAL = {Discrete \& Computational Geometry. An International Journal
of Mathematics and Computer Science},
VOLUME = {34},
YEAR = {2005},
NUMBER = {2},
PAGES = {269--284},
}

@article{Het12,
  title={A second look at the toric h-polynomial of a cubical complex},
  author={Hetyei, G{\'a}bor},
  journal={Annals of Combinatorics},
  volume={16},
  number={3},
  pages={517--541},
  year={2012},
  publisher={Springer}
}

@article{Kar04,
  title={Hard {L}efschetz theorem for nonrational polytopes},
  author={Karu, Kalle},
  journal={Inventiones mathematicae},
  volume={157},
  number={2},
  pages={419--447},
  year={2004},
  publisher={Springer}
}

@article {LW07,
    AUTHOR = {Liu, Lily L. and Wang, Yi},
     TITLE = {A unified approach to polynomial sequences with only real
              zeros},
   JOURNAL = {Adv. in Appl. Math.},
  FJOURNAL = {Advances in Applied Mathematics},
    VOLUME = {38},
      YEAR = {2007},
    NUMBER = {4},
     PAGES = {542--560},
}

@article{PRW08,
  title={Faces of generalized permutohedra},
  author={Postnikov, Alex and Reiner, Victor and Williams, Lauren},
  journal={Documenta Mathematica},
  volume={13},
  pages={207--273},
  year={2008}
}

@incollection {Sta87,
    AUTHOR = {Stanley, Richard},
     TITLE = {Generalized {$H$}-vectors, intersection cohomology of toric
              varieties, and related results},
 BOOKTITLE = {Commutative algebra and combinatorics ({K}yoto, 1985)},
    SERIES = {Adv. Stud. Pure Math.},
    VOLUME = {11},
     PAGES = {187--213},
 PUBLISHER = {North-Holland, Amsterdam},
      YEAR = {1987},
}

@book {Sta12,
    AUTHOR = {Stanley, Richard P.},
     TITLE = {Enumerative combinatorics. {V}olume 1},
    SERIES = {Cambridge Studies in Advanced Mathematics},
    VOLUME = {49},
   EDITION = {Second},
 PUBLISHER = {Cambridge University Press, Cambridge},
      YEAR = {2012},
     PAGES = {xiv+626},
}

\end{document}